\newtheorem{theorem}{Theorem}[section]
\newtheorem{lemma}[theorem]{Lemma}
\newtheorem{definition}[theorem]{Definition}
\newtheorem{example}[theorem]{Example}
\newtheorem{proposition}[theorem]{Proposition}
\newtheorem{cor}[theorem]{Corollary}
\renewcommand{\H}{{H_{\textrm{head}}}}
\newcommand{\T}{{H_{\textrm{tail}}}}
\newcommand     {\Zset}    {{\mathbb Z}}
\DeclareMathOperator{\Maps} {Maps}
\DeclareMathOperator{\op} {op}
\def\quotient#1#2{%
    \raise1ex\hbox{$#1$}\Big/\lower1ex\hbox{$#2$}%
}
\begin{document}

\title
{Hecke-Kiselman monoids of small cardinality}
\author[R.~Aragona]{Riccardo Aragona}
\email{ric\_aragona@yahoo.it}

\author[A.~D'Andrea]{Alessandro D'Andrea}
%\thanks{The author was partially supported by
%PRIN ``Spazi di Moduli e Teoria di Lie'' fundings from MIUR and
%project MRTN-CT 2003-505078 ``LieGrits'' of the European Union}
%\address{Dipartimento di Matematica, Universit\`a degli Studi di
%Roma ``La Sapienza'', Roma}
\email{dandrea@mat.uniroma1.it}

\date{\today}

\maketitle

%\tableofcontents
\begin{abstract}
In this paper, we give a characterization of digraphs $Q, |Q|\leq 4$ such that the associated Hecke-Kiselman monoid $H_{Q}$ is finite.
In general, a necessary condition for $H_Q$ to be a finite monoid is that $Q$ is acyclic and its Coxeter components are Dynkin diagram. We show, by constructing examples, that such conditions are not sufficient.
\end{abstract}
\section{Introduction}
Let $Q$ be a digraph, i.e., a graph having at most one connection (side) between each pair of distinct vertices; sides can be either oriented (arrows) or non-oriented (edges). In \cite{gm}, Ganyushkin and Mazorchuk associate with $Q$ a semigroup $H_Q$ generated by idempotents $a_i$ indexed by vertices of $Q$, subject to the following relations
\begin{itemize}
\item $a_ia_j=a_ja_i$, if $i$ and $j$ are not connected;
\item $a_ia_ja_i=a_ja_ia_j$, if $(\xymatrix@-1pc{i\ar@{-}[r]&j}) \in Q$, i.e., $i$ and $j$ are connected by a side;
\item $a_ia_j=a_ia_ja_i=a_ja_ia_j$, if $(\xymatrix@-1pc{i\ar[r]&j}) \in Q$, i.e., $i$ and $j$ are connected by an arrow from $i$ to $j$.
\end{itemize}
$H_Q$ is the \textit{Hecke-Kiselman monoid} attached to $Q$.

In \cite{f}, Forsberg proves faithfulness of certain representations of Hecke-Kiselman monoids and constructs some classes of such representations. Hecke-Kiselman monoids also appear in the works \cite{g} and \cite{p} of Grensing,  where she studies projection functors $P_S$ attached to simple modules $S$ of a finite dimensional algebra, which satisfy the above defining relations.

The two extremal type of digraphs are graphs, where all sides are edges, and oriented graphs, in which all sides are arrows. When $Q$ is the full graph on $\{1,2,\ldots,n\}$ with the natural order, then the corresponding Hecke-Kiselman monoid is Kiselman's monoid $K_n$ from \cite{k, km}. $K_n$ is known to be finite \cite[Theorem 3]{km} for all $n$. If a digraph $Q$ only has arrows, but possesses no oriented cycles, then $H_Q$ is isomorphic to a quotient of $K_{|Q|}$, hence it is finite.

If a digraph $Q$ has no arrows --- in particular, when it is a finite simply laced Coxeter graph --- then $H_Q$  is the \textit{Springer-Richardson}, \textit{$0$-Hecke}, or \textit{Coxeter monoid} attached to $Q$. Monoid algebras over $0$-Hecke monoids were studied by Norton \cite{n} in the finite-dimensional case: this corresponds to requiring that $Q$ is a Dynkin diagram. Notice that finite Coxeter monoids also appear in the work \cite{rs} of Springer and Richardson on the combinatorics of Schubert subvarieties of flag manifolds. In \cite{dhst} and in \cite{gm}, the results of Norton were interpreted and studied within the framework of $J$-trivial semigroups. In particular, finite $0$-Hecke monoids and Kiselman monoids, along with their quotients,  are examples of $J$-trivial monoids (see \cite[Chapter IV, Section 5]{l}).

The problem of determining finiteness of the Hecke-Kiselman monoid associated to a digraph $Q$ with both edges and arrows, appears to be combinatorially involved and is, to the best of our knowledge, unsettled. In this paper we give some conditions on a digraph $Q$ for $H_{Q}$ to be finite. We produce a complete classification of finite $H_Q$ when $|Q|\leq 4$.

It is easy to see that if $H_Q$ is a finite monoid, then $Q$ is acyclic (see Definition \ref{acyclic}) and the Coxeter graph $C$ obtained from $Q$ by removing all arrows is necessarily a Dynkin diagram (Corollary \ref{dynkin}). The main observation in this paper is that these two properties do not provide a characterization of digraphs of finite type, as the combinatorics of arrows plays a fundamental role in determining the finiteness character of $H_Q$.

\section{Cycles in Hecke-Kiselman monoids}

We will say that a digraph $Q$ is of {\em finite type} whenever the monoid $H_Q$ is finite.
A digraph $Q$ and the digraph $Q^{\op}$, obtained from $Q$ by reversing each arrow, yield anti-isomorphic Hecke-Kinselman monoids. As a consequence, $Q$ is of finite type if and only if $Q^{\op}$ is.
\begin{lemma}\label{remove}
Let $Q$ be a digraph of finite type. If $Q'$ is obtained from $Q$ by orienting an edge, or removing an arrow, then $Q'$ is of finite type.
\end{lemma}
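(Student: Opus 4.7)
The plan is to construct, in both cases, a surjective monoid homomorphism $H_Q \twoheadrightarrow H_{Q'}$ that is the identity on generators; finiteness of $H_{Q'}$ will then follow immediately from that of $H_Q$. By the universal property of monoid presentations, such a surjection exists precisely when all defining relations of $H_Q$ are satisfied inside $H_{Q'}$. So in each case I just need to verify that the $R_Q$-relations at the vertices $i,j$ being modified continue to hold after the modification, noting that all other relations are unchanged and that the idempotency of the generators is preserved.

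For the first case (orienting the edge $\{i,j\}$ into an arrow $i\to j$), the relation in $H_Q$ associated with this pair is the braid-type relation $a_ia_ja_i=a_ja_ia_j$. In $H_{Q'}$ the corresponding relation is the strictly stronger $a_ia_j=a_ia_ja_i=a_ja_ia_j$, which trivially contains the braid relation as a consequence. So this case is essentially immediate.

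The second case (deleting an arrow $i\to j$) is the one that requires a small computation but is still elementary. In $H_{Q'}$ the pair $i,j$ becomes disconnected, so $a_i$ and $a_j$ commute. Together with idempotency, commutativity yields
\[
a_ia_ja_i=a_i^2a_j=a_ia_j,\qquad a_ja_ia_j=a_ia_j^2=a_ia_j,
\]
so the arrow relation $a_ia_j=a_ia_ja_i=a_ja_ia_j$ of $H_Q$ holds automatically in $H_{Q'}$. Thus again every $R_Q$-relation is valid in $H_{Q'}$, yielding the desired surjection.

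The main (and only) obstacle I anticipate is simply making sure that the implication really goes in the right direction, since in both cases $H_{Q'}$ is obtained by \emph{adding} relations rather than removing them; one has to be careful that the ``new'' relations of $R_{Q'}$ imply the ``old'' relations of $R_Q$ at the modified pair, not the other way around. Once this bookkeeping is correctly arranged, no further work is needed.
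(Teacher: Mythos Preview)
Your argument is correct: in both cases the defining relations of $H_Q$ at the modified pair $\{i,j\}$ are consequences of the (stronger) relations imposed in $H_{Q'}$, so the identity on generators extends to a surjection $H_Q\twoheadrightarrow H_{Q'}$ and finiteness descends. The paper's own proof is merely a citation of \cite[Proposition~14]{gm}; you have written out the elementary verification that this cited result encapsulates.
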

\begin{proof}
It follows from \cite[Proposition 14]{gm}.
\end{proof}
\begin{cor}\label{dynkin}\qquad
\begin{itemize}
\item
Let $C$ be a Dynkin diagram, $Q$ an oriented graph obtained from $C$ by choosing an orientation of every edge. Then $H_{Q}$ is finite.
\item
Let $Q$ be a digraph of finite type, $C$ be the graph obtained from $Q$ by removing every arrow.
Then $C$ is a Dynkin diagram.
\end{itemize}
\end{cor}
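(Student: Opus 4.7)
The plan is to deduce both statements from Lemma \ref{remove} combined with the already-cited finiteness of the $0$-Hecke monoid in Dynkin type (Norton's theorem, specialized to simply laced Coxeter graphs).

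For the first claim, I would begin with the observation that, because $C$ has no arrows, the Hecke-Kiselman monoid $H_C$ coincides with the $0$-Hecke monoid of the Coxeter system associated with $C$; since $C$ is a Dynkin diagram, the corresponding Coxeter group is finite, and hence so is $H_C$. Thus $C$ is itself of finite type. I would then obtain $Q$ from $C$ by orienting its edges one at a time: each such step is precisely the ``orienting an edge'' operation appearing in Lemma \ref{remove}, and so it preserves finite type. After finitely many such steps, the resulting digraph, which is $Q$, must be of finite type.

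For the second claim, I would argue in the reverse direction. Starting from $Q$ of finite type, I would remove its arrows one at a time; by iterated application of Lemma \ref{remove} each intermediate digraph, and in particular the final graph $C$, is again of finite type. Because $C$ has no arrows, $H_C$ is again a $0$-Hecke monoid of a simply laced Coxeter system, and its finiteness is equivalent to the finiteness of the associated Coxeter group. The classification of finite Coxeter groups then forces each connected component of $C$ to be one of $A_n$, $D_n$, $E_6$, $E_7$, $E_8$, i.e.\ a simply laced Dynkin diagram, which is exactly what is meant by $C$ being a Dynkin diagram in this setting.

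The reasoning is essentially bookkeeping: the only non-trivial input is the equivalence between finiteness of a simply laced Coxeter group and that of its $0$-Hecke monoid, which is standard and already recorded in the introduction. The main point to be careful about is that Lemma \ref{remove} is stated for a single orientation or arrow removal, so both parts require iterating it a bounded number of times; no further combinatorial argument is needed.
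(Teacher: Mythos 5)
Your proposal is correct and follows essentially the same route as the paper: the paper's one-line proof invokes exactly the equivalence between finiteness of a simply laced Coxeter graph's $0$-Hecke monoid and it being a Dynkin diagram, with the iteration of Lemma \ref{remove} (orienting edges for the first claim, removing arrows for the second) left implicit. You have merely made the bookkeeping explicit.
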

\begin{proof}
A simply laced Coxeter graph is of finite type if and only if it is a Dynkin diagram.
\end{proof}
As an example, in the case of Kiselman's monoid $K_n$, removing all arrows yields a disjoint union of $n$ components of type $A_1$. It is important to notice that removing sources or sinks\footnote{Recall that a vertex of a digraph is a {\em source} (resp. a {\em sink}) if all sides touching it are outgoing (resp. incoming) arrows.} from a digraph does not affect its finiteness character.
\begin{proposition}\label{mxl}
Let $Q$ be a digraph. If $a\in Q$ is a source (reps. sink) vertex, then $axa=ax$ (resp. $axa=xa$) for every $x\in H_Q$. In particular, if $Q'$ is obtained from $Q$ by removing $a$ and every edge connected to $a$, then $Q'$ is of finite type if and only if $Q$ is.
\end{proposition}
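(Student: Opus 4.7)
The plan is to first establish the word identity $axa = ax$ for every $x \in H_Q$ whenever $a$ is a source, and then use this identity to compare the cardinalities of $H_Q$ and $H_{Q'}$. The sink case follows by passing to $Q^{\op}$, via the anti-isomorphism $H_{Q^{\op}} \cong H_Q^{\op}$ noted at the start of Section~2.

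For the identity, I would first verify the single-generator case $aya = ay$. If $y$ is not adjacent to $a$, then $aya = a^2 y = ay$ by commutation and idempotency; if $y = a$, both sides equal $a$; and if $y = b$ with $a \to b$ --- the only adjacency possible for a source --- the arrow relation gives $aba = ab$ directly. I would then argue by induction on the length of a word representing $x$. Writing $x = x' y$ and using the inductive hypothesis $ax'a = ax'$, the $y$-not-adjacent and $y = a$ cases are immediate. The delicate step is $y = b$ with $a\to b$: one inserts an $a$ using the IH, applies the arrow relation $aba = ab$, and collapses again by the IH:
\[
ax'ba \;=\; (ax'a)\,ba \;=\; ax'(aba) \;=\; ax'(ab) \;=\; (ax'a)\,b \;=\; ax'b.
\]
The main subtlety of the proof lies in this interplay, which routes around the fact that the arrow relation only lets us absorb an $a$ to the right of $b$ once another $a$ has been paired to its left.

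For the finiteness equivalence, the direction ``$Q$ of finite type $\Rightarrow Q'$ of finite type'' actually holds for any vertex $a$: sending $a \mapsto 1$ and every other generator to itself respects all defining relations, since each relation involving $a$ becomes a trivial consequence of the idempotency of the remaining generator, yielding a surjection $H_Q \to H_{Q'}$. Conversely, assuming $H_{Q'}$ finite, I would use $axa = ax$ to put each word into a normal form: any word $w$ in the generators of $H_Q$ can be written as $u_0\,a\,u_1\,a\,\cdots\,a\,u_k$ with each $u_i$ avoiding $a$, and repeatedly applying $a\,u_j\,a = a\,u_j$ merges consecutive blocks, so that eventually $w$ is of the form $u_0\,a^{\epsilon}\,u_1$ with $\epsilon \in \{0,1\}$ and $u_0, u_1$ words in the generators of $H_{Q'}$. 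This gives the crude bound $|H_Q| \leq 2\,|H_{Q'}|^2$ and completes the argument.
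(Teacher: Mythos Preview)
Your argument is correct. The induction establishing $axa=ax$ is clean, and the crucial step---inserting an $a$ via the inductive hypothesis so that the arrow relation $aba=ab$ can be applied, then collapsing again---is exactly the right manoeuvre. For the finiteness equivalence, your normal form $u_0\,a^{\epsilon}\,u_1$ is valid; the only point worth making explicit is that $u_0,u_1$ live in the submonoid of $H_Q$ generated by the non-$a$ generators, and this submonoid is a quotient of $H_{Q'}$ (the defining relations among the remaining generators are identical in $Q$ and $Q'$), so its cardinality is indeed bounded by $|H_{Q'}|$.

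As for comparison: the paper does not give an argument at all here, simply citing \cite[Lemma~1]{km}. Your write-up is a self-contained proof of precisely the statement being invoked, and the method---induction on word length for the absorption identity, followed by a normal-form count---is the standard one and almost certainly what the cited lemma does. So there is no genuine divergence of strategy; you have supplied the details the paper outsources.
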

\begin{proof}
It follows from \cite[Lemma 1]{km}.
\end{proof}

%\begin{proposition}\label{*}
%Let $Q$ be a digraph and $a\in Q$ be a source (resp. sink) vertex.

\begin{definition}\label{acyclic}
Let $Q$ be a digraph. A cycle in $Q$ is a sequence $\{a_i, i \in \Zset/n\Zset\}, n\geq 3$, of vertices of $Q$ such that there exists in $Q$ an edge or an arrow going from $a_i$ to $a_{i+1}$.
A cycle only composed of arrows is an {\em oriented cycle}.
We say that a digraph is acyclic if it contains no cycles.
\end{definition}
\begin{example}
The following are both cycles:
$$
\begin{array}{ccccc}
\xymatrix{\bullet\ar[r]&\bullet\ar@{-}[d]\\
\bullet\ar@{-}[u]&\bullet\ar[l]}
&&&&
\xymatrix{\bullet\ar[r]&\bullet\ar[d]\\
\bullet\ar[u]&\bullet\ar[l]}
\end{array}
$$
However, only the latter is an oriented cycle.
\end{example}
%\begin{rem}\label{quotient}
%Let $Q'$ be a graph obtained removing some edges or diedges from a digraph $Q$ such that $|Q'|=|Q|$. Let $Q^{rem}$ be the subset of $Q$ of all edges and diedges removed from $Q$, we define an equivalence relation $\overleftrightarrow{R}$ on $H_Q$ generated by $\{(ab,ba)\,|\, a,b\in Q \mbox{ s.t. }  (\xymatrix{a\ar@{<->}[r]&b}) \in Q^{rem}\}\subset H_Q\times H_Q$ and so we have
%$$
%H_{Q'}=\quotient{H_Q}{\overleftrightarrow{R},}
%$$
%where $\xymatrix{a\ar@{<->}[r]&b}$ means that $a$ and $b$ are connected in $Q$ or with an edge or with a diedge from $a$ to $b$ or with a diedge from $b$ to $a$.
%\end{rem}

Let $a_1,\ldots, a_n$ be generators of $H_Q$. If $Q$ is an oriented graph, we set $a_i>a_j$ whenever there is an arrow connecting $a_i$ to $a_j$ and we take the transitive closure of this relation.
When $Q$ is acyclic, we obtain a partial ordering on $Q$, that we may always refine to a (non necessarily unique) total order.
\begin{lemma}\label{orientedcycle}
The $n$-cycle
$$
\xymatrix@-1pc{&&a_2\ar@{.>}[rr]&&a_{i-1}\ar[dr]&\\
Q=&a_1\ar[ur]&&&&a_i\ar[dl]\\
&&a_n\ar[ul]&&a_{i+1}\ar@{.>}[ll]&}
$$

is not of finite type.
\end{lemma}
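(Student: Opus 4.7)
The plan is to show directly that $H_Q$ is infinite by constructing a representation of $H_Q$ by self-maps of $\Zset$ and exhibiting an infinite family of elements with pairwise distinct images. The guiding picture is that the cycle $Q$ is the image of the infinite line $\Zset$ under reduction modulo $n$, and that although each generator is idempotent in $H_Q$, it can be lifted to an unbounded ``advancing'' idempotent on the cover $\Zset$; going once around the cycle in $H_Q$ will then correspond to translation by $n$ on $\Zset$.

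Concretely, for each $i\in\Zset/n\Zset$ I would define $f_i:\Zset\to\Zset$ by setting
\[
f_i(j)=j+1 \text{ if } j\equiv i-1\pmod n, \text{ and } f_i(j)=j \text{ otherwise},
\]
and then show that $a_i\mapsto f_i$ extends to an anti-homomorphism $\rho: H_Q\to \Maps(\Zset,\Zset)$ (equivalently, let each $a_i$ act on $\Zset$ on the right via $f_i$). Idempotency of $f_i$ is built in, since it sends points of residue $i-1$ to points of residue $i$, which it then fixes. The arrow relation $a_ia_{i+1}=a_ia_{i+1}a_i=a_{i+1}a_ia_{i+1}$ becomes the three-way equality $f_{i+1}\circ f_i= f_i\circ f_{i+1}\circ f_i=f_{i+1}\circ f_i\circ f_{i+1}$, which I would verify by a case split on the residue of $j$ among the three classes $i-1,\, i,\, i+1\pmod n$ (distinct because $n\geq 3$). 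The commutation of non-adjacent generators, needed only when $n\geq 4$, follows because the ``shift sites'' $\{i-1,i\}$ and $\{j-1,j\}$ inside $\Zset/n\Zset$ are disjoint precisely when $i\not\equiv j,\,j\pm 1 \pmod n$, so the two maps act on disjoint subsets of $\Zset$ and trivially commute.

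With the representation in hand, the final step is to track the image of $j=0$ under the elements $w^k$, where $w=a_1a_2\cdots a_n$. Applying $a_1, a_2, \ldots, a_n$ in order to $0$, each successive $f_i$ finds its argument sitting in its shift site $i-1\pmod n$ and advances it by one, so $0\cdot w=n$; by induction $0\cdot w^k = kn$. Since these images are pairwise distinct in $\Zset$, the elements $w^k\in H_Q$ are pairwise distinct, and $H_Q$ is infinite. I expect the main obstacle to be spotting the correct lifted action, since once the $f_i$ are written down every verification reduces to routine modular arithmetic; the only genuine subtlety is the left/right convention, because the three-term arrow relation holds only for one side of composition and so $\rho$ must be taken as an anti-homomorphism (equivalently, a homomorphism $H_Q\to\Maps(\Zset,\Zset)^{\op}$).
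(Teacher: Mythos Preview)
Your argument is correct. The strategy is the same as the paper's---build a representation by self-maps of some set and exhibit an element of infinite order---but the concrete representation differs. The paper acts on $\Zset^n$ via a genuine homomorphism: $u_i$ overwrites the $i$-th coordinate with the $(i{+}1)$-st (and $u_n$ overwrites the $n$-th with $m_1+1$), so that $u_1\cdots u_n$ sends $(m_1,\dots,m_n)$ to $(m_1+1,\dots,m_1+1)$. Your action on $\Zset$, by contrast, lifts the cycle to its universal cover and realises ``going once around'' as translation by $n$; this is more economical (a $1$-dimensional set instead of $\Zset^n$) and carries a pleasant geometric picture, at the price of having to pass to an anti-homomorphism, a subtlety you correctly flag and which the paper's choice sidesteps. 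Your case analysis for the arrow relation and the commutation of non-adjacent generators is accurate; in particular the disjointness of the residue windows $\{i-1,i\}$ and $\{j-1,j\}$ for non-adjacent $i,j$ is exactly what makes the commutation go through, and the tracking $0\cdot w^k=kn$ is straightforward once one notes that $w$ translates every $j\equiv 0\pmod n$ by $n$.
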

\begin{proof}
The collection $\Maps(\Zset^n)$ of all maps $f: \Zset^n \to \Zset^n$ is a semigroup under composition. Our strategy is to construct a semigroup homomorphism $\rho: H_Q \to \Maps(\Zset^n)$ and show that its image is infinite. Notice that, due to the presentation of $H_Q$, $\rho$ is given as soon as we choose images $u_i = \rho(a_i),\, i = 1, \dots, n$ satisfying the defining relations of $H_Q$.
Let $u_i: \Zset \to \Zset, i = 1, \dots, n$ be defined as follows:
\begin{itemize}
\item $u_i(m_1,\ldots,m_n)=(m_1,\dots,m_{i-1},m_{i+1},m_{i+1},\ldots,m_n)$,\qquad if $i = 1, \dots, n-1$;
\item $u_n(m_1,\ldots,m_n)=(m_1,\ldots,m_{n-1},m_1+1)$.
\end{itemize}
A straightforward check shows that $u_i$ satisfy the
defining relations.
However,
$$
(u_1\dots u_n)(m_1,\ldots,m_n)=(m_1+1,m_1+1,\ldots,m_1+1),
$$
showing that all powers of $u_1 \dots u_n$ are distinct. We conclude that the image of $\rho$ is infinite, hence $H_{Q}$ is too.
\end{proof}
\begin{theorem}\label{noncycles}
A digraph of finite type is acyclic.
\end{theorem}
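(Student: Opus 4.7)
The plan is to prove the contrapositive: assume $Q$ contains a cycle $C = \{a_1, \ldots, a_n\}$, $n\geq 3$, and show that $Q$ is not of finite type by reducing to Lemma \ref{orientedcycle}. The idea is to surgically modify $Q$ using the three operations that interact predictably with the finite type property: orienting edges and removing arrows (Lemma \ref{remove}), and deleting sources or sinks (Proposition \ref{mxl}).

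First I would orient every edge of $Q$. For each edge lying on $C$, I orient it in the direction of the cycle, so that the connection from $a_i$ to $a_{i+1}$ becomes an arrow $a_i\to a_{i+1}$; for any arrow of $C$ this direction already holds by the definition of a cycle. Edges outside $C$ are oriented arbitrarily. By Lemma \ref{remove}, the resulting oriented digraph $Q_1$ is of finite type whenever $Q$ is, so it suffices to show $Q_1$ is not of finite type. After this step, the cycle $C$ is realized inside $Q_1$ as a genuine oriented $n$-cycle.

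Next I would remove every arrow of $Q_1$ that does not belong to the oriented cycle $C$. A second application of Lemma \ref{remove} reduces the problem to showing the resulting digraph $Q_2$ is not of finite type. In $Q_2$, the cycle $C$ is intact, and every vertex outside $C$ is isolated: any side that previously touched such a vertex was, by construction, an arrow not on $C$, and has been deleted. Isolated vertices are simultaneously sources and sinks, so by iterated application of Proposition \ref{mxl} I can remove them one at a time without changing the finiteness character, arriving at a digraph $Q_3$ which is exactly the oriented $n$-cycle of Lemma \ref{orientedcycle}.

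Finally, Lemma \ref{orientedcycle} asserts that $Q_3$ is not of finite type, and unwinding the chain of reductions yields that $Q$ is not of finite type. The only potentially delicate point is ensuring that the sequence of operations really leaves the cycle untouched while stripping away everything else; this is handled by orienting $C$'s edges in the cycle direction before deleting any arrows, so that no arrow of the oriented cycle is ever eligible for removal. No further combinatorial obstacle arises, since the heavy lifting — producing an infinite homomorphic image — has already been done in the proof of Lemma \ref{orientedcycle}.
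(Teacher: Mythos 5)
Your proposal is correct and follows essentially the same reduction as the paper: strip $Q$ down to the oriented cycle of Lemma \ref{orientedcycle} using Lemma \ref{remove} and Proposition \ref{mxl}. In fact you are slightly more careful than the paper's two-line proof, since you orient the edges first so that every subsequent deletion is a legitimate application of Lemma \ref{remove} (which only permits removing arrows, not edges), a point the paper glosses over.
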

\begin{proof}
Assume that $Q$ contains a cycle, and denote by $Q'$ the digraph obtained from $Q$ by removing all connections not belonging to the cycle, and orienting the remaining edges so as to form an oriented cycle. Then $Q'$ is of infinite type by Proposition \ref{mxl} and Lemma \ref{orientedcycle}.
\end{proof}

Let $Q$ be a finite
digraph and $Q'$ be obtained from $Q$ by removing all arrows. Then
$Q'$ is a disjoint union of (finitely many) uniquely determined connected graphs, called \textit{Coxeter components of} $Q$. We have already seen in Corollary \ref{dynkin} that if $Q$ is of finite type, then all of its Coxeter components are of Dynkin type. Absence of cycles in a finite digraph imposes geometrical constraints on the arrows.
\begin{proposition}\label{order}
Let $Q$ be an acyclic digraph. Then the set of Coxeter components of $Q$ can be totally ordered in such a way that an arrow connects a vertex in the Coxeter component $C$ to a vertex in the Coxeter component $C'$ only if $C>C'$.
\end{proposition}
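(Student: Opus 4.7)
The plan is to introduce a relation $\succ$ on the finite set of Coxeter components of $Q$ by declaring $C\succ C'$ precisely when $Q$ contains an arrow from some vertex of $C$ to some vertex of $C'$, and to take its transitive closure $>$. Any strict partial order on a finite set extends to a compatible total order by topological sort, and such a total extension automatically satisfies the conclusion of the proposition. The whole statement therefore reduces to checking that $>$ is irreflexive.

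Suppose for contradiction it is not: there is a $\succ$-cycle $C_0\succ C_1\succ\cdots\succ C_{k-1}\succ C_0$, which I may take of minimal length $k\geq 1$, so that $C_0,\ldots,C_{k-1}$ are pairwise distinct. For each $i$ (indices mod $k$), fix a witnessing arrow $a_i\to b_{i+1}$ with $a_i\in C_i$ and $b_{i+1}\in C_{i+1}$. Since $Q$ is acyclic, no Coxeter component can contain a cycle of edges --- such a cycle would itself be a cycle of $Q$ in the sense of Definition \ref{acyclic} --- so each Coxeter component is a tree and carries a unique simple edge-path $P_i$ from $b_i$ to $a_i$ inside $C_i$. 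Splicing arrows and edge-paths together yields a closed walk that respects arrow directions; by the pairwise distinctness of the $C_i$ it visits no vertex twice except for the start/end, so it is itself a cycle of $Q$, provided its length $k+\sum_i\ell(P_i)$ is at least $3$.

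This final length check, though elementary, is the only delicate point. For $k\geq 3$ it is immediate. For $k=2$, if both $P_i$ had length $0$ one would get two arrows between the same pair of vertices, violating the requirement that digraphs carry at most one side between any two vertices; hence the total length is at least $3$. For $k=1$, the arrow $a_0\to b_0$ and the path $P_0$ both connect $a_0$ and $b_0$ within a single component, so $P_0$ must have length at least $2$ (length $0$ would yield a loop, length $1$ would duplicate the arrow), again producing a cycle of length $\geq 3$. In every case the acyclicity of $Q$ is contradicted, so $>$ is antisymmetric and the topological sort argument supplies the desired total order.
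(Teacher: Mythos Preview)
Your proof is correct and follows essentially the same strategy as the paper: both arguments show that a failure of the desired ordering produces a cycle among Coxeter components under the arrow relation, and then lift this to an actual cycle in $Q$ using connectedness of the components (the paper phrases this as ``find a maximal component, else infinite descent'', you phrase it as ``the transitive closure is irreflexive, then topological-sort''). Your version is in fact more careful than the paper's, which simply asserts ``as each Coxeter component is connected, there must exist a cycle in $Q$'' without checking the length-$3$ requirement or the simplicity of the resulting walk.
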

\begin{proof}
We are going to show the existence of a Coxeter component $C$ with only outgoing arrows. The statement then follows by setting $C$ to be maximal, and using induction to determine the total order on remaining components.

Assume by contradiction that $Q$ has no maximal Coxeter component. Then, every Coxeter component $C$ of $Q$ has an incoming arrow and we can find $C' \neq C$ such that there is an arrow from $C'$ to $C$. We can thus build a sequence $C_0, C_1, \dots, C_n$ of Coxeter components of arbitrary length, so that there is an arrow from $C_{i+1}$ to $C_i$ for every $i$. Due to finiteness of $Q$, there can only be finitely many Coxeter components. As each Coxeter component is connected, there must exist a cycle in $Q$.
\end{proof}
A total order as above may fail to be unique. For instance, every total order on a totally disconnected digraph satisfies the requirements of Proposition \ref{order}.

\section{Digraphs of small cardinality}
The tools we have developed so far allow one to classify digraphs of finite type of very small cardinality. When addressing digraphs of larger cardinality, we encounter more complicated combinatorial issues. In this section we will be dealing only with acyclic digraphs. Recall that if all Coxeter components of a digraph $Q$ are of type $A_1$, i.e., they are isolated points, then $Q$ is a quotient of a Kiselman monoid, hence it is of finite type.
\begin{theorem}\label{upto3}
Every acyclic digraph of cardinality at most three is of finite type.
\end{theorem}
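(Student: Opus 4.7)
The plan is to run a short case analysis on $|Q|$, reducing every acyclic digraph to one of three types that has already been handled: an acyclic oriented graph, whose monoid is a quotient of a Kiselman monoid (as recalled in the introduction); a Dynkin Coxeter graph, whose monoid is the finite $0$-Hecke monoid (as used in the proof of Corollary \ref{dynkin}); or a digraph possessing a genuine source or sink, which Proposition \ref{mxl} allows one to remove without affecting the finiteness character.

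For $|Q|\leq 2$ the statement is immediate: either $Q$ has no connections (so $H_Q$ is a commutative monoid on idempotents), or $Q$ has a single edge (Coxeter type $A_2$), or $Q$ has a single arrow (a quotient of $K_2$). For $|Q|=3$ I would organise the analysis by the Coxeter components of $Q$, which by Corollary \ref{dynkin} must fall among $3A_1$, $A_1\sqcup A_2$, and $A_3$. The $3A_1$ case has no edges, so $Q$ is an acyclic oriented graph and $H_Q$ is a quotient of $K_3$. The $A_3$ case forces $Q$ to equal the Coxeter graph $A_3$ itself, since any arrow between the two endpoints of the path would close a cycle (in the sense of Definition \ref{acyclic}) through the intermediate edges.

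The main case is $A_1\sqcup A_2$: denote the $A_2$-component by $1-2$ and the isolated vertex by $3$, and enumerate the arrows between $3$ and $\{1,2\}$. Acyclicity eliminates the two mixed configurations in which $3$ has one incoming and one outgoing arrow, because each would close a $3$-cycle through the edge $1-2$; in every surviving configuration $3$ touches only outgoing arrows or only incoming ones, hence is a source or sink in the strict sense required by Proposition \ref{mxl}. Removing $3$ leaves a digraph on two vertices, which is of finite type by the base case. The only place any care is required is this acyclicity check; no genuine obstruction arises, precisely because the subtle arrow combinatorics warned of in the abstract has no room to manifest itself when $|Q|\leq 3$.
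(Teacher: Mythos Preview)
Your argument is correct and follows the same strategy as the paper's: treat the pure Coxeter case as a (finite) $0$-Hecke monoid, and in every other case locate a source or sink and invoke Proposition \ref{mxl} to drop to smaller cardinality. The paper's proof is simply a two-line compression of what you wrote, phrased as the dichotomy ``no arrows'' versus ``more than one Coxeter component''.

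One logical slip to fix: you justify the list $3A_1$, $A_1\sqcup A_2$, $A_3$ by appealing to Corollary \ref{dynkin}, but that corollary assumes $Q$ is already of finite type, which is precisely what you are proving. The restriction you want holds anyway, for a different reason: the only simply laced Coxeter graph on three vertices \emph{not} on your list is the triangle $\tilde A_2$, and that is itself a cycle in the sense of Definition \ref{acyclic}, so acyclicity of $Q$ rules it out directly. With that replacement your proof is complete.
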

\begin{proof}
If $Q$ has no arrows, then it is of Dynkin type, and the corresponding monoid is finite. If $Q$ has more than one Coxeter component, then it must have either a sink or a source, whence we may apply an easy induction.
\end{proof}
Let now $Q$ be an acyclic digraph with $|Q|=4$. If $Q$ is not connected, then it is a disjoint union of digraphs of smaller cardinality and it is of finite type by Theorem \ref{upto3}. So, assume $Q$ to be connected.

If $Q$ has no arrows, then $Q$ is of Dynkin type $D_4$ or $A_4$, hence it is of finite type. If $Q$ has at least a Coxeter component of type $A_1$, then it is of finite type by Proposition \ref{mxl} and Theorem \ref{upto3}. Thus, we only need to understand the case where $Q$ has exactly two Coxeter components of type $A_2$. In all that follows, $K$ will denote the digraph
$$
\xymatrix{a\ar[r]\ar[dr]&c\ar@{-}[d]\\
b\ar@{-}[u]\ar[r]\ar[ur]&d.}
$$
Let $\T$, $\H$ denote the submonoids of $H_K$ generated by $\{a, b\}, \{c, d\}$ respectively. Notice that both $\T$ and $\H$ are isomorphic images of $H_{A_2}$, as $H_K$ projects to $H_{A_2}$ by collapsing either $\{a, b\}$ or $\{c, d\}$ to $1$. If $w \in H_K$, let $l(w)$ denote the length of a reduced expression of $w$ as a products of elements $a, b, c, d$.

\begin{lemma}\label{wordK(A2A2)}
Let $w \in H_K$. Then there exist elements
$\{w_i\,|\,1\leq i\leq n\}\subseteq \T$ and $\{v_i\,|\,1\leq i\leq n\}\subseteq \H$ such that
\begin{equation}\label{reduced}
w = w_0 v_n w_1 v_{n-1} \dots v_1 w_n v_0
\end{equation}
where $v_i \neq 1, w_i \neq 1$ for all $i \neq 0$ and
\begin{itemize}
\item[(i)] if $l(w_i)=3$, then either $i = n = 0$ or $i=n=1$ and $w_0 = 1$;
\item[(ii)] if $1 < i < n$, then $l(w_i)=1$;
\item[(iii)] if $l(w_i) = 1$, then $w_{i-1} w_i \neq w_{i-1}$ if $i \neq 0$ and $w_i w_{i+1} \neq w_{i+1}$ if $i \neq n$. In particular, $w_i \neq w_{i+1}$ for $1 < i <n$;
\item[(iv)] if $l(w_i) = 2$, and $i \neq 0, n$, then $i=1$ and $w_0 = 1$. Moreover, if $l(w_i) = l(w_{i+1}) = 2$, then $w_i = w_{i+1}$;
\end{itemize}
and similarly,
\begin{itemize}
\item[(i)] if $l(v_i)=3$, then either $i = n = 0$ or $i=n=1$ and $v_0 = 1$;
\item[(ii)] if $1 < i < n$, then $l(v_i)=1$;
\item[(iii)] if $l(v_i) = 1$, then $v_i v_{i-1} \neq v_{i-1}$ if $i \neq 0$ and $v_{i+1}v_i \neq v_{i+1}$ if $i \neq n$. In particular, $v_i \neq v_{i+1}$ for $1 < i <n$;
\item[(iv)]  if $l(v_i) = 2$, and $i \neq 0, n$, then $i=1$ and $v_0 = 1$. Moreover, if $l(v_i) = l(v_{i+1}) = 2$, then $v_i = v_{i+1}$.
\end{itemize}
\end{lemma}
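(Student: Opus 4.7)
The plan is to prove existence of the decomposition by induction on $l(w)$, the base case $w = 1$ being handled by $n=0$ and $w_0 = v_0 = 1$. A useful first observation is a symmetry: the anti-isomorphism between $H_K$ and $H_{K^{\op}}$ obtained by reversing words swaps the roles of $\T$ and $\H$ and relabels the decomposition by $w_i \leftrightarrow v_i$ while preserving indices, converting the stated $w_i$-conditions into the $v_i$-conditions and vice versa. Hence it suffices to prove the existence of a decomposition satisfying the $w_i$-conditions; the $v_i$-conditions follow by applying the same result inside $H_{K^{\op}}$.

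For the construction, I would start from any expression of $w$ as a product of generators, group consecutive $\T$- and $\H$-letters into maximal blocks, reduce each block inside its $H_{A_2}$-submonoid (so that each has length $\leq 3$, since $|H_{A_2}|=6$), and insert trivial outer factors as needed to get the shape $w_0 v_n w_1 v_{n-1} \ldots w_n v_0$, with $w_i, v_i$ nontrivial for $i \neq 0$. Among all such representations of $w$, pick one minimizing the lexicographic pair $(n,\ \sum_i l(w_i) + \sum_i l(v_i))$. Conditions (i)--(iv) will then be forced by minimality. The engine of the reduction is the family of arrow-based absorption identities $xyx = xy = yxy$ for $x \in \{a,b\}$, $y \in \{c,d\}$, from which I would extract the central sublemma: \emph{if $t \in \T$ has length $\geq 2$ and $y, y' \in \H$ are generators, then $y\, t\, y' = y\, t'\, y'$ for some $t' \in \T$ with $l(t') < l(t)$.} This immediately forces (ii), and a refinement (applied to the unique length-3 element $aba = bab$) covers (i). Condition (iii) follows because any equation $w_{i-1} w_i = w_{i-1}$ would allow removing $w_i$ and merging its neighbors through the $v$-factor between them, contradicting minimality of $n$. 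Condition (iv) combines the same minimality principle with the fact that $H_{A_2}$ has only two length-2 elements, namely $ab$ and $ba$.

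The main obstacle will be the verification of the central absorption sublemma. It splits into small cases according to the specific letters $y, y'$ adjacent to $t$ and the form of $t$ (namely $ab$, $ba$, or $aba = bab$), each requiring a careful combination of the braid relations $aba = bab$, $cdc = dcd$, and the arrow relations. The asymmetric provisions in (i) and (iv) --- which permit length-$3$ or length-$2$ factors at $i = 1$ precisely when $w_0 = 1$ --- reflect exactly the configuration in which the left $\H$-neighbor needed for the absorption is absent, so the shortening mechanism fails and the longer factor must survive. This boundary behaviour is what makes the statement slightly asymmetric and must be tracked carefully throughout the case analysis; once the central sublemma and its boundary refinement are in hand, all four conditions fall out of the minimality of the chosen decomposition.
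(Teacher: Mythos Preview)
Your overall framework---block the word into alternating $\T$- and $\H$-factors, pick a minimal such expression, and read off (i)--(iv) from minimality---is exactly what the paper does (there phrased as ``assume the concatenation is a reduced word''), and the $K\leftrightarrow K^{\op}$ symmetry for passing from the $w_i$-statements to the $v_i$-statements is fine.

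The gap is your central sublemma: it is false. Take $t=ab$ and $y=c$, $y'=d$. Then $c\,ab\,d\neq c\,t'\,d$ for every $t'\in\{1,a,b\}$. Indeed, setting $a\mapsto 1$ sends $c\,ab\,d$ and $c\,a\,d$ to $c\,b\,d$ and $c\,d$; further setting $d\mapsto 1$ reduces the question to $cb\neq c$ in the two-generator monoid with an arrow $b\to c$, which holds (that monoid has the five distinct elements $1,b,c,bc,cb$). The cases $t'=b$ and $t'=1$ are handled the same way. So an interior $\T$-block of length~$2$ cannot be shortened using only the adjacent $\H$-letters, and your proposed engine for (ii) (and hence for (i) and (iv)) does not run.

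The mechanism the paper actually uses is different and comes from Proposition~\ref{mxl}: $a$ and $b$ are \emph{sources} in $K$, so $a\,x\,a=a\,x$ and $b\,x\,b=b\,x$ for every $x\in H_K$. Shortening an interior $w_i$ therefore uses the \emph{neighbouring $\T$-block} $w_{i-1}$, not the $\H$-letters. For instance, if $w_i=ab$ with $i\geq 2$, then $w_{i-1}\neq 1$; if $w_{i-1}$ ends in $a$ one rewrites $(\dots a)\,v\,ab=(\dots a)\,v\,b$, while if it ends in $b$ one rewrites $(\dots b)\,v\,ab=(\dots b)\,v\,b\,ab=(\dots b)\,v\,bab$, after which every later $w_j$ can be absorbed, forcing $i=n$. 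This is precisely why the hypothesis $i\geq 2$ is needed in (ii) and why the exceptional clauses in (i) and (iv) allow a long block at $i=1$ only when $w_0=1$: in that boundary case there is no $w_{i-1}$ to supply the source letter. If you replace your sublemma by this source-absorption argument, the rest of your outline goes through essentially as in the paper.
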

\begin{proof}
We will henceforth assume that the product of all nontrivial terms in \eqref{reduced} is a reduced expression for $w$ in terms of $a, b, c, d$.
We first prove that $w_i$ satisfy properties \textit{(i)}-\textit{(iv)}.

First of all, observe that we may assume that if $w_i = 1$ for some $i \neq 0$, then we may drop it, and multiply the two adjacent terms.
\begin{itemize}
\item[(i)]  If $l(w_i)=3$ then $w_i=aba=bab$. By Proposition \ref{mxl}, we may remove all occurrences of $a$ and $b$ appearing on the right of $w_i$, and conclude that $i=n$.
Still by Proposition \ref{mxl}, $x \, a \, v \, aba = x \, a \, v \, ba$ and $x \, b \, v \, bab = x \, b\, v \, ab$ can be further simplified for every $v \in \H$. By the reducedness assumption, one has $n = 0$ or $n = 1$ and $w_0 = 1$.

\item[(ii)] By property (i), we note that $l(w_i)>1$, with $2 \leq i \leq n-1$, implies $w_i\in\{ab,ba\}$. Say that $w_i = ab$ for some $i \geq 2$. We want to show that $i = n$. Indeed, $w_{i-1} \neq 1$ by the initial observation, and $w_{i-1} \neq a, ba, aba$ otherwise $w$ may be further simplified by replacing $w_i = ab$ with $w_i = b$. Then, $w_{i-1}$ equals either $b$ or $ab$.  However, in this case, $w_{i-1} v_{n-i + 1} w_i = w_{i-1} v_{n-i+1} bab$ and, as before, we may cancel all $w_j, j > i$. Reducedness of \eqref{reduced} then implies $i = n$. The case $w_i = ba$ is totally analogous.

\item[(iii)] Use again Proposition \ref{mxl}. If $w_{i-1}w_i = w_{i-1}$, then canceling $w_i$ gives an expression for $w$ of lower length. If $w_i w_{i+1} = w_{i+1}$, then $w_{i+1}$ begins by $w_i$, and one may reduce $w$ to a shorter expression.

\item[(iv)] Assume that $i \neq 0$ and $w_{i-1} \neq 1$. If $w_{i-1}$ has length one, then $w_{i-1} w_i \neq w_i$ has necessarily length $3$; it is easy to check that this also happens if $w_{i-1}$ has higher length. Then one may replace $w_{i-1} v_{n-i+1} w_{i}$ with $w_{i-1} v_{n-i+1} aba$ in $w$ and cancel all $w_j, j > i$. This show that either $i = n$ or $w_{i-1} = 1$, which is only possible if $i = 1$.

As for the last statement, notice that $ab\, v\, ba = ab\, v \,a$ and $ba\, v \,ab = ba\, v\, b$ by Proposition \ref{mxl}, hence we may assume $w_i = w_{i+1}$ by the reducedness assumption.
\end{itemize}
The proof for the $v_i$ is totally analogous.
\end{proof}

In simple words, Lemma \ref{wordK(A2A2)} says that $w_0$ is the only possibly trivial element among the $w_i$. Moreover, if $aba=bab$ appears among the $w_i$, then it is the only nontrivial one, and terms of length two only show up at the beginning and the end of \eqref{reduced}; if they are followed (resp. preceded) by a term of length one, they do not end (resp. begin) by that term; two adjacent terms of length two are necessarily equal. All remaining $w_i$ are of length one, and no two adjacent ones are equal, so as to avoid possible simplifications. The same description applies to the $v_i$.

\begin{cor}\label{corwordK(a2a2)}
Every element in $H_{K}$ can be expressed as
\begin{equation}\label{word}
w(xyzt)^nw',
\end{equation}
where $n\in\mathbb{N}$, $\{x, z\} = \{a, b\}$, $\{y, t\} = \{c, d\}$, and $l(w),l(w') \leq 10$.
\end{cor}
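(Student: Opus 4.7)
The plan is to start from the normal form $w = w_0 v_n w_1 v_{n-1} w_2 v_{n-2}\dots v_1 w_n v_0$ provided by Lemma~\ref{wordK(A2A2)}, and to show that its central portion collapses to a clean power $(xyzt)^k$ while the two ends remain short.

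When $n \le 2$, the expression contains at most six factors, each of length at most $3$; combining this with property (iv) of Lemma~\ref{wordK(A2A2)}, which forces $w_0 = 1$ whenever $l(w_1) = 2$ (and symmetrically for the $v$-factors), a direct case check yields $l(w) \le 10$. In that case we absorb the whole word into $w$ and take $w' = 1$ with exponent $0$.

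For $n \ge 3$, I would focus on the central block $v_{n-1} w_2 v_{n-2} w_3 \dots v_2 w_{n-1}$. By property (ii) each of these factors has length exactly $1$, and by property (iii) the subsequences $w_2, w_3, \dots, w_{n-1}$ and $v_{n-1}, v_{n-2}, \dots, v_2$ each alternate inside $\{a,b\}$ and $\{c,d\}$ respectively. This forces the block to be periodic with period $4$, of the form $\gamma \xi \gamma' \xi' \gamma \xi \gamma' \xi' \dots$ where $\{\gamma,\gamma'\} = \{c,d\}$ and $\{\xi,\xi'\} = \{a,b\}$. Peeling off the leading letter $v_{n-1} = \gamma$ into the head, the remainder starts with $\xi$ and can be written as $(xyzt)^k \cdot s$, with $xyzt = \xi\gamma'\xi'\gamma$ and a short suffix $s$ of length $(2n-5)\bmod 4 \in \{1,3\}$.

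Setting $w := w_0 v_n w_1 v_{n-1}$ and $w' := s \cdot v_1 w_n v_0$, I would bound the two ends via the length estimates from the lemma: for $n \ge 3$ one has $l(w_0), l(v_n), l(w_1) \le 2$, and property (iv) again forces $w_0 = 1$ whenever $l(w_1) = 2$, giving $l(w_0) + l(w_1) \le 3$ and hence $l(w) \le 2 + 3 + 1 = 6$. A symmetric argument on the tail, together with $l(s) \le 3$, yields $l(w') \le 3 + 2 + 3 = 8$. Both are well within the required bound $10$. The main nuisance is chasing the parity of $n$ and verifying that the leftover suffix $s$ really does have length at most $3$; this is the one place where a small case distinction is unavoidable.
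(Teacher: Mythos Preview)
Your argument is correct and follows essentially the same route as the paper: extract the normal form from Lemma~\ref{wordK(A2A2)}, observe that the interior factors all have length~$1$ and alternate, package them as a power of $xyzt$, and bound the two ends. Your write-up is more detailed than the paper's (which is a two-sentence sketch) and in fact yields the sharper bounds $l(w)\le 6$, $l(w')\le 8$ for $n\ge 3$; the paper simply notes that at each end one has at most two length-one factors plus one longer factor from each of the $w$- and $v$-families, giving $2(1+1+3)=10$. One small remark: your appeal to property~(iv) in the $n\le 2$ case only bites when $n=2$ (for $n=1$ the index $i=1$ equals $n$, so (iv) does not apply), but the bound $\le 10$ still holds there by the direct count, so the conclusion is unaffected.
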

\begin{proof}
We can certainly group $4n$ adjacent $w_i, v_j$ of length one, so that they are preceded (resp. followed) by at most two $w_i$ of length one along with a non trivial $w_i$ of different length, and similarly for the $v_i$. Then the product of the $4n$ terms is a power of $xyzt$ as in the statement, and terms preceding and following it have length at most $2(1 + 1 + 3) = 10$.
\end{proof}

\begin{cor}\label{4finite}
A quotient of $K$ is finite if and only if $acbd$ and $adbc$ have finitely many distinct powers.
\end{cor}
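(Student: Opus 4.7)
The forward direction is immediate: every subset of a finite monoid is finite, so in particular the set of powers of any element is finite.

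For the converse, the plan is to use Corollary \ref{corwordK(a2a2)} as a normal form: every element of $H_K$, and hence of any quotient, can be written as $w(xyzt)^n w'$ with $l(w), l(w')\le 10$ and $\{x,z\}=\{a,b\}$, $\{y,t\}=\{c,d\}$. Since the alphabet has only four letters, there are only finitely many possibilities for the prefix $w$ and the suffix $w'$, so finiteness of the quotient reduces to checking that each of the four admissible middle words $acbd$, $adbc$, $bcad$, $bdac$ contributes only finitely many distinct powers in the quotient.

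Two of these, $acbd$ and $adbc$, are covered directly by the hypothesis; the remaining two are cyclic rearrangements of the first two. To transfer finiteness along such a rearrangement, I would invoke the elementary identity
$$
(yztx)^{n+1}\;=\;yzt\cdot(xyzt)^n\cdot x,
$$
valid in any monoid, which shows that the powers of $yztx$ all lie in the set $yzt\cdot\{(xyzt)^n:n\ge 0\}\cdot x$; this set is finite whenever $\{(xyzt)^n\}$ is. Applied iteratively, the identity propagates finiteness of the powers of $acbd$ to each of its cyclic shifts $cbda$, $bdac$, $dacb$, and symmetrically for $adbc$. In particular, the hypothesis covers all four middle patterns; combining this with the finitely many choices of $w$ and $w'$ produces an explicit finite upper bound on the cardinality of the quotient.

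The main obstacle, if any, is simply the combinatorial bookkeeping to identify $bdac$ as a cyclic shift of $acbd$ and $bcad$ as a cyclic shift of $adbc$, and to track the prefixes and suffixes introduced by the shift identity; the identity itself is entirely routine and requires no properties of $H_K$ beyond associativity.
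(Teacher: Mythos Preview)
Your argument is correct and follows the same route as the paper: both reduce the claim to the normal form of Corollary~\ref{corwordK(a2a2)}, so that finiteness of the quotient amounts to finiteness of the powers of the possible middle words. The paper's proof is a single line (``Follows immediately from Corollary~\ref{corwordK(a2a2)}'') and does not spell out why only the two words $acbd$ and $adbc$ need be checked rather than all four patterns $xyzt$; your cyclic-shift identity $(yztx)^{n+1}=yzt\,(xyzt)^n\,x$ supplies exactly the missing justification, correctly identifying $bdac$ and $bcad$ as cyclic shifts of $acbd$ and $adbc$ respectively.
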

\begin{proof}
Follows immediately from Corollary \ref{corwordK(a2a2)}.
\end{proof}

\begin{lemma}\label{K(A2A2)}
$K$ is not of finite type.
\end{lemma}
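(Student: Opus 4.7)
By Corollary~\ref{4finite}, it suffices to exhibit one element in $\{acbd,adbc\}$ whose powers are pairwise distinct in $H_K$. My plan is to follow the blueprint of Lemma~\ref{orientedcycle}: construct a representation $\rho\colon H_K\to\Maps(\Zset^N)$ via explicit coordinate maps $u_a,u_b,u_c,u_d\colon\Zset^N\to\Zset^N$ satisfying the defining relations of $H_K$, and then show that the iterates of $\rho(acbd)$ (or $\rho(adbc)$) are all distinct. Each $u_x$ will be an idempotent ``shift'' that writes a single coordinate as a function of some of the others.

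The write/read assignment must be chosen so that: (i) for every arrow $x\to y$ of $K$, the write coordinate of $u_x$ is disjoint from the read coordinates of $u_y$ and vice versa, which delivers the strong absorption identities $u_xu_yu_x=u_xu_y=u_yu_xu_y$ required for arrows; (ii) on each edge only the weaker braid identity is forced, and the pair may remain non-commuting; this non-commutativity is essential, since if all the generators commuted then idempotence would give $(u_au_cu_bu_d)^n = u_a^nu_c^nu_b^nu_d^n = u_au_cu_bu_d$, and the composite would be of finite order; (iii) one of the generators (most naturally $u_d$) incorporates a ``$+1$'' on a distinguished counter coordinate, designed so as to remain idempotent yet to propagate through the chain of applications in $u_au_cu_bu_d$, so that this composite acts as $(\ldots,m_i,\ldots)\mapsto(\ldots,m_i+1,\ldots)$ on that coordinate---mirroring the role of $u_1u_2\cdots u_n$ in Lemma~\ref{orientedcycle}.

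The principal obstacle is that $K$ itself is acyclic: all arrows go from $\{a,b\}$ to $\{c,d\}$, so $K$ contains no directed cycle and the representation of Lemma~\ref{orientedcycle} does not transfer verbatim. One must therefore implement a ``virtual'' cycle by exploiting that the underlying undirected graph of $K$ is $K_4$, and by distinguishing between the two edges $a{-}b,c{-}d$ (which tolerate non-commutativity) and the four arrows (which enforce absorption). The delicate design point is to place the $+1$ increment in a coordinate that every arrow-absorption relation leaves invariant after its own application, yet which is refreshed when the full composite is applied; a reasonable scheme is to have $u_d$ read a coordinate modified only by $u_a$, with $u_b$ and $u_c$ serving as intermediate shifts that transmit the update along the chain. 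Verifying that a given candidate choice satisfies all seven relation families (idempotence, two braids, four arrow-absorptions) is the bulk of the work. Once that verification is in hand, iterating $\rho(acbd)$ on a generic input exhibits a coordinate growing linearly in the exponent, proving the powers pairwise distinct and hence, by Corollary~\ref{4finite}, that $K$ is not of finite type.
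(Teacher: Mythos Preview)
Your proposal is a strategy outline rather than a completed argument, and its central design choice contains a fatal flaw. Condition~(i) --- that for each arrow $x\to y$ the write coordinate of $u_x$ lie outside the read set of $u_y$ \emph{and vice versa} --- does indeed yield the absorption identities $u_xu_y=u_xu_yu_x=u_yu_xu_y$, but it yields strictly more: it forces $u_x$ and $u_y$ to \emph{commute}, since neither map can see the coordinate the other modifies. Under~(i), therefore, every element of $\{u_a,u_b\}$ commutes with every element of $\{u_c,u_d\}$, and any word in the image of $\rho$ can be rewritten as $w_1w_2$ with $w_1\in\langle u_a,u_b\rangle$ and $w_2\in\langle u_c,u_d\rangle$. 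Each of these submonoids is a homomorphic image of the $0$-Hecke monoid $H_{A_2}$, which has exactly six elements; hence the image of $\rho$ is a quotient of $H_{A_2}\times H_{A_2}$ and has at most $36$ elements. No placement of a ``$+1$'' increment in $u_d$ can rescue this: the image is finite regardless, so $\rho(acbd)$ has finite order and the appeal to Corollary~\ref{4finite} fails. You were right that non-commutativity is essential, but you located it only on the edges; it is in fact needed across the arrows as well, and your read/write disjointness scheme rules that out.

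The paper avoids this obstruction by abandoning single-coordinate overwrite maps on $\Zset^N$ altogether. It constructs instead a combinatorial action of $a,b,c,d$ on the vertex set of an explicit infinite graph (Figure~\ref{fig1}), verifies the defining relations directly, and observes that distinct powers of $acbd$ send a fixed base vertex to distinct vertices. In that action the arrow relations hold \emph{without} the corresponding generator pairs commuting, which is precisely what allows the orbit to be infinite. If you wish to salvage a $\Zset^N$-style representation, you must realise $u_xu_y=u_xu_yu_x=u_yu_xu_y$ with $u_xu_y\neq u_yu_x$; the disjoint read/write mechanism you propose cannot do this, so a genuinely different construction is required.
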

\begin{proof}
We define an action of the generators $a$, $b$, $c$ and $d$ of $H_{K}$ on the set of the vertices $V$ of the infinite graph in Figure \ref{fig1}.

Each generator act on any given vertex according to the arrow originating from the vertex with the corresponding label, with the understanding that the generator fixes the vertex if there is no outgoing arrow with that label.
A straightforward check shows that $a$, $b$, $c$ and $d$ have idempotent actions of $V$, and they furthermore satisfy the defining relations:
\begin{itemize}
\item $aba=bab$;
\item $cdc=dcd$;
\item $ac=aca=cac$;
\item $ad=ada=dad$;
\item $bc=bcb=cbc$; and
\item $bd=bdb=dbd$.
\end{itemize}

We conclude that $K$ is of infinite type, as distinct powers of $acbd$ (resp. $adbc$) have distinct actions on the central vertex $A_0$ (resp. the vertex $B_0$).
\end{proof}

\begin{landscape}
\begin{figure}[!h]
$$
%\hspace{-15mm}\xymatrix@-1.85pc{&&&&&&&&&&&&&&&&&A^2_0\ar@{=}[d]&&&&&&&&&&&&&&&&&\\
\vspace{30mm}\xymatrix@-1.60pc{&&&&&&&&&&&&&&&&&A_1\ar@{=}[d]&&&&&&&&&&&&&&&&&\\
&&&&&&&&&C_0\ar[d]
^c\ar[dl]_a&&&&B_0\ar[dd]
^c\ar[llll]_b&&&&A_0\ar[dd]
^c\ar[llll]_a\ar[rrrr]^b&&&&B_{1}\ar[dd]
^c\ar[rrrr]^a&&&&C_{1}\ar[d]
^c\ar[dr]^b&&&&&&&&&\\
&&&&&&&&X&F_0\ar[l]^a\ar[d]^b\ar[rr]^d&&E_0\ar[r]^a\ar[dddd]^b|!{[d]}\hole&X&&&&&&&&&&X&E_1\ar[l]_b\ar[dddd]^a|!{[d]}\hole&&F_1\ar[r]_b\ar[d]_a\ar[ll]_d&X&&&&&&&&\\
&&&&&C_3\ar[d]^d\ar[dl]_b&&&&B_3\ar[llll]_a\ar[dd]^d&&&&A_3\ar[llll]_b\ar[dd]^d\ar[drr]^a&&&&L_0\ar[dll]_a\ar[drr]^b&&&&A_2\ar[rrrr]^a\ar[dd]^d\ar[dll]_b&&&&B_2\ar[rrrr]^b\ar[dd]^d&&&&C_2\ar[d]^d\ar[dr]^a&&&&&\\
&&&&X&F_3\ar[l]^b\ar[rr]^c\ar[d]^a&&E_3\ar[r]^b\ar[dddd]^a|!{[d]}\hole&X&&&&&&&P_0\ar[dd]^b&&L_1\ar@{=}[u]&&P_1\ar[dd]^a&&&&&&&X&E_2\ar[l]_a\ar[dddd]^b|!{[d]}\hole&&F_2\ar[r]_a\ar[ll]_c\ar[d]_b&X&&&&\\
&C_4\ar[d]^c\ar[dl]_a&&&&B_4\ar[dd]^c\ar[llll]_b&&&&A_4\ar[llll]_a\ar[drr]^b\ar[dd]^c&&&&L_3\ar[dll]_b\ar[urr]^a&&&&&&&&L_2\ar[ull]_b\ar[drr]^a&&&&A_5\ar[rrrr]^b\ar[dll]_a\ar[dd]^c&&&&B_5\ar[dd]^c\ar[rrrr]^a&&&&C_5\ar[d]^c\ar[dr]^b&\\
X&F_4\ar[l]^a\ar[rr]^d\ar[d]^b&&E_4\ar[r]^a\ar[dddd]^b|!{[d]}\hole&X&&&&&&&P_3\ar[dd]^a&&&&Q_0\ar[dd]^a&&&&Q_1\ar[dd]^b&&&&P_2\ar[dd]^b&&&&&&&X&E_5\ar[l]_b\ar[dddd]^a|!{[d]}\hole&&F_5\ar[r]_b\ar[ll]_d\ar[d]_a&X\\
&B_7\ar[dd]^d\ar@{.>}[l]&&&&A_7\ar[llll]_b\ar[drr]^a\ar[dd]^d&&&&L_4\ar[dll]_a\ar[urr]^b&&&&&&&&&&&&&&&&L_5\ar[ull]_a\ar[drr]^b&&&&A_6\ar[rrrr]^a\ar[dll]_b\ar[dd]^d&&&&B_6\ar[dd]^d\ar@{.>}[r]&\\
&&&&&&&P_4\ar[dd]^b&&&&Q_3\ar[dd]^b&&&&X&&&&X&&&&Q_2\ar[dd]^a&&&&P_5\ar[dd]^a&&&&&&&\\
&A_8\ar@{.>}[l]\ar[drr]^b\ar@{.>}[dd]&&&&L_7\ar[dll]_b\ar[urr]^a&&&&&&&&&&&&&&&&&&&&&&&&L_6\ar[ull]_b\ar[drr]^a&&&&A_9\ar@{.>}[r]\ar[dll]_a\ar@{.>}[dd]&\\
&&&P_7\ar[dd]^a&&&&Q_4\ar[dd]^a&&&&X&&&&&&&&&&&&X&&&&Q_5\ar[dd]^b&&&&P_6\ar[dd]^b&&&\\
&&{}\ar@{.>}[ur]&&&&&&&&&&&&&&&&&&&&&&&&&&&&&&{}\ar@{.>}[ul]&&\\
&&&Q_7\ar[dd]^b&&&&X&&&&&&&&&&&&&&&&&&&&X&&&&Q_6\ar[dd]^a&&&\\
&&&&&&&&&&&&&&&&&&&&&&&&&&&&&&&&&&\\
&&&X&&&&&&&&&&&&&&&&&&&&&&&&&&&&X&&&}
$$
\caption{}\label{fig1}
\end{figure}
\end{landscape}

\begin{theorem}
$K$ is the only acyclic digraph of infinite type with four vertices.
\end{theorem}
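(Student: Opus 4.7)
The plan is a case analysis based on which of the four possible arrows between the two Coxeter components of $Q$ are present. By the reductions already carried out, I may assume $Q$ is connected, acyclic, and its Coxeter components are two copies of $A_2$, say $\{a,b\}$ and $\{c,d\}$. By Proposition \ref{order} and, if necessary, replacing $Q$ by $Q^{\op}$, every arrow of $Q$ goes from $\{a,b\}$ to $\{c,d\}$. Thus $Q$ is specified by a nonempty subset $S\subseteq\{a\to c,\,a\to d,\,b\to c,\,b\to d\}$, and $|S|=4$ gives $Q=K$, which is of infinite type by Lemma \ref{K(A2A2)}. It remains to show that every such $Q$ with $|S|<4$ is of finite type.

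For any such $Q$, the defining relations of $H_Q$ are obtained from those of $H_K$ by adding one commutation $a_ia_j=a_ja_i$ for each arrow of $K$ absent from $Q$, so $H_Q$ is a quotient of $H_K$. By Corollary \ref{4finite} it then suffices to show that the powers of $acbd$ and of $adbc$ in $H_Q$ form finite sets. Up to the symmetries $a\leftrightarrow b$, $c\leftrightarrow d$, and $Q\leftrightarrow Q^{\op}$, the inequivalent subcases left to analyse are: $|S|=1$; $|S|=2$ with parallel arrows (e.g.\ $\{a\to c,\,b\to d\}$); $|S|=2$ with arrows sharing a source (e.g.\ $\{a\to c,\,a\to d\}$); and $|S|=3$ (e.g.\ $\{a\to c,\,a\to d,\,b\to c\}$). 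In each subcase I would use the extra commutations to rewrite $(acbd)^{n+1}$ (and analogously $(adbc)^{n+1}$) so as to expose an internal subword of the form $xwx$ where $x$ is a source or sink of a suitable subdigraph, then apply Proposition \ref{mxl} to cancel the repeated $x$; iterating this pattern should produce a stabilization $(acbd)^{n+1}=(acbd)^n$ for some small explicit $n$.

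The main obstacle will be the three-arrow case, in which only one extra commutation is available and the rewriting is very restricted. Here I expect to need either a delicate explicit reduction of $(acbd)^{n+1}$ to $(acbd)^n$ tailored to the specific configuration (using the normal form given by Lemma \ref{wordK(A2A2)} to organise the argument), or the construction of a faithful action of $H_Q$ on an explicit finite set, serving as a finite counterpart of the infinite graph used in the proof of Lemma \ref{K(A2A2)}.
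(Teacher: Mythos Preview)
Your overall strategy---reduce to two $A_2$ Coxeter components, observe that each $H_Q$ is a quotient of $H_K$, and invoke Corollary~\ref{4finite}---matches the paper's. But you are working harder than necessary. The same quotient argument you apply to $H_K$ applies one level down: every $Q$ with $|S|<4$ is obtained from some three-arrow digraph $Q'$ by deleting further arrows, and hence $H_Q$ is a quotient of $H_{Q'}$. Since (as you note) all three-arrow configurations are equivalent under the symmetries $a\leftrightarrow b$, $c\leftrightarrow d$, there is a \emph{single} digraph $Q'$ to treat. Your subcases $|S|=1$ and $|S|=2$ are therefore redundant; the paper skips directly to the three-arrow case.

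For that case, the paper does exactly the explicit reduction you anticipate, but carries it out rather than leaving it as a plan. Taking $Q'$ with arrows $a\to c$, $a\to d$, $b\to c$ and $x=adbc$, one computes $x^2=adc(bab)dc$ and then $x^3=aba\,cdc$; since $a,b,c,d$ each act trivially by right multiplication on $aba\,cdc$ (using $czc=zc$, $dzd=zd$ for $z\in\langle a,b\rangle$), this gives $x^n=x^3$ for all $n\geq 3$. The element $acbd$ is handled analogously. No auxiliary finite action is needed; the rewriting, while delicate, goes through directly.
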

\begin{proof}
We only need to handle the case when the digraph $Q \neq K$ has two Coxeter components of type $A_2$. By Lemma \ref{K(A2A2)}, it suffices to prove that the digraph
$$
Q':\xymatrix{a\ar[r]\ar[dr]&c\ar@{-}[d]\\
b\ar@{-}[u]\ar[r]&d}
$$
is of finite type, since $H_Q$ is a quotient of $H_{Q'}$.

Let us compute all powers of $x = adbc$. Notice that $adbc = adcb$ as $b$ and $c$ commute, and that $ay = aya$ (resp. $by = byb$) for every $y \in \langle c, d\rangle$ as $ad = ada, ac = aca$ (resp. $bd = bdb, bc = bcb$). Then, $x = adcb = adcab$, hence
$$
x^2=(adcab)(adcb)=adc(aba)dcb=adc(bab)dcb = adcba(bdcb) = adcba(bdc) = adc(bab)dc,
$$
and
\begin{equation*}\begin{split}
x^3 =x^2 x = &adc(aba)dc(adbc)= adcab(adca)dbc= adcab(adc)dbc = adc(aba)dcdbc = \\& adc(bab)dcdbc = adcba(bdcdb)c = adcba(bdcd)c = adc(bab)(dcdc) = adc(aba)(cdc).
\end{split}
\end{equation*}
However $dc(aba)cdc = (aba)cdc$ as $czc = zc, dzd = zd$ for all $z \in \langle a, b\rangle$. It is now easy to check $a, b, c, d$ act trivially by right multiplication on $x^3 = aba\, cdc$, hence $x^n = x^3$ for all $n > 3$. Thus $x$ has only finitely many distinct powers.

A similar proof works for $acbd$, and we conclude that $Q'$ is of finite type by using Corollary \ref{4finite}.
\end{proof}
It is likely that our techniques may be extended to handle the case of two Coxeter components of any Dynkin type. However, characterizing the combinatorics of all digraphs of finite type with three or more Coxeter components appears to be much more difficult.

%\hspace{-26.5mm}\xymatrix@-1.35pc
%\hspace{30mm}\xymatrix@-1.70pc

%\begin{definition}
%A connected digraph $Q$ is of finite type if
%\begin{itemize}
%\item it is simply laced;
%\item it has no equiorientable cycle;
%\item its not oriented connected components are Coxeter graphs of finite type.
%\end{itemize
%\end{definition}
%\begin{example}
%An example of digraph of finite type is
%$$
%\xymatrix{&C^1&C^2&C^3&C^4\\
%&\bullet\ar[r]\ar@{-}[d]&\bullet\ar@{-}[d]&\bullet\ar[dr]&\bullet\ar@{-}[d]\\
%\bullet\ar@{-}[r]\ar[drr]&\bullet\ar[r]\ar[ur]\ar@{-}[d]\ar[urr]&\bullet\ar@{-}[d]&&\bullet\\
%&\bullet\ar@{-}[d]&\bullet\ar[uur]&&\\
%&\bullet&&&}
%$$
%where $C^1=D_5$, $C^2=A_3$, $C^3=A_1$ and $C^4=A_2$ with $\xymatrix{C^1\ar[r]& C^2,}$ $\xymatrix{C^1\ar[r]& C^3,}$ $\xymatrix{C^2\ar[r]& C^3,}$\\ and $\xymatrix{C^3\ar[r]& C^4}$.
%\end{example}
%From Proposition \ref{noncycles}, Theorem \ref{coxeter} and Corollary \ref{order}, next result follows.
%\begin{theorem}
%Let $Q$ be a digraph. If $H_Q$ is finite, then $Q$ is disjoint union of digraphs of finite type.
%\end{theorem}

\vfill

\end{document}